\newcommand{\abs}[1]{\lvert #1 \rvert}
\DeclareMathSymbol{\twoheadrightarrow} {\mathrel}{AMSa}{"10}
\def\Q{{\mathbf Q}}
        \def\PP{{\mathcal P}}
\def\Z{{\mathbf Z}}
\def\C{{\mathbf C}}
\def\R{{\mathbf R}}
\def\H{{\mathrm H}}
\def\ST{{\mathbf S}}
\def\Sn{{\mathbf S}_n}
\def\An{{\mathbf A}_n}
\def\Gal{\mathrm{Gal}}
\def\Lie{\mathrm{Lie}}
\def\End{\mathrm{End}}
\def\Aut{\mathrm{Aut}}
\def\Hom{\mathrm{Hom}}
\def\GL{\mathrm{GL}}
                \def\sL{\mathfrak{sl}}
        \def\K_a{\bar{K}}
\def\E{\mathrm{E}}
\def\dim{\mathrm{dim}}
\def\c{{\mathfrak c}}
                          \def\k{{\mathfrak k}}
                           \def\Hdg{\mathrm{Hdg}}
                      \def\hdg{\mathrm{hdg}}
                            \def\Tr{\mathrm{Tr}}
                            \def\U{\mathrm{U}}
                            \def\f{{\mathfrak f}}
\newtheorem{thm}{Theorem}
\newtheorem{lem}[thm]{Lemma}
\newtheorem{cor}[thm]{Corollary}
\theoremstyle{definition}
\newtheorem{rem}[thm]{Remark}
\newtheorem{step}{Step}
\title[Hodge groups]{Hodge Groups of certain superelliptic jacobians II}
\author{Jiangwei Xue}
\address{National Center for Theoretical Sciences, Math. Division,
  Third General Building, National Tsing Hua University, No.101, Sec
  2, Kuang Fu Road, Hsinchu, Taiwan 30043, Taiwan R.O.C.}
\email{xue\_j\char`\@math.cts.nthu.edu.tw}
\begin{document}
\maketitle
\section{introduction}
Throughout this paper $\C$ is the field of complex numbers,
$K\subseteq \C$ is a subfield of $\C$, $f(x)\in K[x]$ a polynomial
without multiple roots and of degree $n\geq 4$. Let $p\in \mathbf{N}$
be a prime that does not divide $n$ and $q=p^r\in \mathbf{N}$ an
integral power of $p$. We write $C_{f,q}$ for the superelliptic
$K$-curve $y^q=f(x)$, and $J(C_{f,q})$ for the Jacobian of
$C_{f,q}$.  By definition, $C_{f,q}$ is the smooth projective model of
the affine curve $y^q=f(x)$. The Jacobian $J(C_{f,q})$ is an abelian
variety over $K$ of dimension
\[ \dim J(C_{f,q})= g(C_{f,q})=\frac{(n-1)(q-1)}{2}. \]
If $q>p$, the map 
\[ C_{f,q}\to C_{f,q/p}, \quad (x,y)\mapsto (x,y^p)\] induces by
Albanese fuctoriality a surjective $K$-map between the Jacobians
$J(C_{f,q})\to J(C_{f,q/p})$.  We write $J^{(f,q)}$ for the identity
component of the kernel. If $q=p$, we set $J^{(f,p)}=J(C_{f,p})$.  It
is follows easily that $J^{(f,q)}$ is an abelian variety over $K$ of
dimension $(n-1)\varphi(q)/2$, where $\varphi$ denotes the Euler
$\varphi$-function. Moreover, $J(C_{f,q})$ is $K$-isogenous to the
product $\prod_{i=1}^r J^{(f,p^i)}$(See \cite{ZarhinM}).

Since $K\subseteq \C$, we may view $J^{(f,q)}$ as a complex abelian
variety. We refer to \cite{Ribet3}, \cite[Sect. 6.6.1 and
6.6.2]{ZarhinIzv} for the definition and basic properties of the Hodge
group (aka special Mumford--Tate group). In \cite{xuezarhin2},
assuming that $n>q$ and some other conditions on $n, q$ and $f(x)$,
the authors showed that the (reductive $\Q$-algebraic connected) Hodge
group of $J^{(f,q)}$ coincides with the largest $\Q$-algebraic
subgroup of $\GL(\H^1(J^{(f,q)},\Q))$ that's ``cut out'' by the
induced polarization from the canonical principal polarization of
$J(C_{f,q})$ and the endomorphism ring of $J^{(f,q)}$. Notice that
when $q=2$ (i.e., in the hyperelliptic case) this group was completely
determined in \cite{ZarhinMMJ} (when $f(x)$ has ``large" Galois
group). In this paper, we study some additional properties of
$J^{(f,q)}$ which will allow us to extend the result to the case $n<q$
as well. This case is necessary in order to treat the infinite towers
of superelliptic jacobians, which, in turn, are useful for the study
of the ranks of Mordell-Weil groups in infinite towers of function
fields (See \cite{UlmZarhin}). 

To state our main result, we make explicit the endomorphism ring and
the polarization mentioned above. Let $X$ be an abelian variety over
$\bar{K}$. We write $\End(X)$ for the ring of all its
$\bar{K}$-endomorphisms and $\End^0(X)$ for the endomorphism algebra
$\End(X)\otimes_\Z \Q$. In a series of papers
\cite{ZarhinMRL,ZarhinCrelle,ZarhinCamb,ZarhinM}, Yuri Zarhin
discussed the structure of $\End^0(J(C_{f,q}))$, assuming that $n\ge
5$ and the Galois group $\Gal(f)$ of $f(x)$ over $K$ is, at least,
doubly transitive. Here $\Gal(f)\subseteq \ST_n$ is viewed as a
permutation group on the roots of $f(x)$. It is well known that $f(x)$
is irreducible over $K$ if and only if $\Gal(f)$ acts transitively on
the roots. For the sake of simplicity let's assume that $K$ contains a
primitive $q$-th root of unity $\zeta_q$. The curve $C_{f,q}:
y^q=f(x)$ admits the obvious periodic automorphism
\[ \delta_q: C_{f,q}\to C_{f,q}, \quad (x,y)\mapsto (x,\zeta_q y).\]
By an abuse of notation, we also write $\delta_q$ for the induced
automorphism of $J(C_{f,q})$. The subvariety $J^{(f,q)}$ is
$\delta_q$-invariant and we have an embedding
\[\Z[\zeta_q]\hookrightarrow \End(J^{(f,q)}), \quad \zeta_q\mapsto
\delta_q.\] In particular, the $q$-th cyclotomic filed
$E:=\Q(\zeta_q)$ is contained in $\End^0(J^{(f,q)})$. Zarhin showed
(\cite{ZarhinMRL,ZarhinM,ZarhinMZ2}) that $\End(J^{(f,q)})$ is isomorphic to
$\Z[\zeta_q]$ if either $\Gal(f)$ coincides with the full symmetric
group $\ST_n$, $n\geq 4$ and $p\geq 3$, or $\Gal(f)$ coincides with
the alternating group $\An$ (or $\ST_n$), and $n\geq 5$. This result has also been
extended to the case $\Gal(f)=\ST_n$ or $\An$, $n\geq 5$ and $p\mid n$
in \cite{xuejnt}.

The first rational homology group $\H_1(J^{(f,q)}, \Q)$ carries a
natural structure of $E$-vector space of dimension
\[\dim_E\H_1(J^{(f,q)},\Q)=\frac{\dim_\Q
  \H_1(J^{(f,q)},\Q)}{[E:\Q]}=\frac{2\dim
  J^{(f,q)}}{[E:\Q]}=\frac{(n-1)\varphi(q)}{\varphi(q)}=n-1.\] Notice
that if $q>2$, then $E$ is a CM field with complex conjugation $e\mapsto
\bar{e}$. Let
\[E^{+}=\{e\in \Q(\zeta_q) \mid \bar{e}=e\}\] be the maximal totally
real subfield of $E$ and let
\[E_{-}=\{e\in \Q(\zeta_q) \mid \bar{e}=-e\}.\] 
The canonical principal polarization on $J(C_{f,q})$ induces a
polarization on $J^{(f,q)}$, which gives rise to a nondegenerate
$E$-sesquilinear Hermitian form (\cite{xuezarhin2})
\[\phi_q:\H_1(J^{(f,q)},\Q) \times \H_1(J^{(f,q)},\Q) \to
E. \] We write $\U(\H_1(J^{(f,q)},\Q),\phi_q)$ for the
unitary group of $\phi_q$ of the $\Q(\zeta_q)$-vector space
$\H_1(J^{(f,q)},\Q)$, viewed as an $\Q$-algebraic subgroup of
$\GL(\H_1(J^{(f,q)},\Q))$ (via Weil's restriction of scalars from
$E^{+}$ to $\Q$ (\cite{Ribet3})). Since the Hodge group respects
the polarization and commutes with endomorphisms of $J^{(f,q)}$,
\[\Hdg(J^{(f,q)})\subset \U(\H_1(J^{(f,q)},\Q),\phi_q).\]
If $\End^0(J^{(f,q)})=E$, then $\U(\H_1(J^{(f,q)},\Q),\phi_q)$ is the
largest connected reductive $\Q$-algebraic subgroup of
$\GL(\H_1(J^{(f,q)},\Q))$ that both respects the polarization and
commutes with endomorphisms of $J^{(f,q)}$.

The following theorem is a natural extension of \cite[Theorem
0.1]{xuezarhin2}. 
\begin{thm}
\label{main} Suppose that  $n\ge 4$ and   $p$ is a prime that does not divide
$n$. Let $f(x) \in \C[x]$ be a degree $n$ polynomial without multiple roots.
Let $r$ be a positive integer and $q=p^r$. Suppose that there exists a subfield
$K$ of $\C$ that contains all the coefficients of $f(x)$. Let us assume that
$f(x)$ is irreducible over $K$ and the Galois group $\Gal(f)$ of $f(x)$ over
$K$ is either $\Sn$ or  $\An$. Assume additionally that either $n \ge 5$ or
$n=4$ and $\Gal(f)=\ST_4$.

Suppose that one of the following three conditions holds:
\begin{itemize}
\item[(A)] $n=q+1$;

\item[(B)] $p$ is odd and $n\not\equiv 1 \mod q$;

\item [(C)] $p=2$, $n\not\equiv 1 \mod q$ and $n\not \equiv q-1 \mod 2q$.
\end{itemize}

Then $\Hdg(J^{(f,q)})= \U(\H_1(J^{(f,q)},\Q),\phi_q)$.
\end{thm}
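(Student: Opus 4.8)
The plan is to carry over the strategy of \cite{xuezarhin2} and reduce the statement to a combinatorial non-degeneracy property of the Hodge numbers of $J^{(f,q)}$ that the conditions (A)--(C) are designed to supply. We already know $\Hdg(J^{(f,q)})\subseteq \U:=\U(\H_1(J^{(f,q)},\Q),\phi_q)$, and under the hypotheses on $\Gal(f)$ one has $\End^0(J^{(f,q)})=E$, so it suffices to prove the reverse inclusion after extension of scalars to $\C$. Decompose $\H_1(J^{(f,q)},\Q)\otimes_\Q\C=\bigoplus_{j}V_j$ over the embeddings $E\hookrightarrow\C$, indexed by $j\in(\Z/q)^\times$ via the eigenvalue of $\delta_q$; each $V_j$ has dimension $n-1$, complex conjugation interchanges $V_j$ and $V_{-j}$, the form $\phi_q$ puts $V_j$ and $V_{-j}$ in duality, and hence $\U_\C\cong\prod_{\{j,-j\}}\GL(V_j)$, a product of $\varphi(q)/2$ copies of $\GL_{n-1,\C}$. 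A direct computation with the differentials $x^{i-1}\,dx/y^{j}$ on $C_{f,q}$ shows that $\dim(V_j\cap\H^{-1,0})=n_j:=\lfloor jn/q\rfloor$, so that $n_j+n_{-j}=n-1$, and the Hodge cocharacter of $\Hdg(J^{(f,q)})_\C$ acts on $V_j$ as $t\mapsto\mathrm{diag}(t,\dots,t,t^{-1},\dots,t^{-1})$ with exactly $n_j$ entries equal to $t$.

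The first step is to bound the derived group from below. Since $\Hdg(J^{(f,q)})$ is a $\Q$-group its complexification contains every $\Aut(\C/\Q)$-conjugate of the Hodge cocharacter; the conjugate labelled by $a\in(\Z/q)^\times$ acts on $V_j$ with multiplicity pattern $(n_{aj},\,n-1-n_{aj})$, and for $n\ge4$ some $a$ coprime to $q$ satisfies $0<n_{aj}<n-1$, so the image of $\Hdg(J^{(f,q)})_\C$ in each $\GL(V_j)$ contains a noncentral semisimple one-parameter subgroup with just two eigenvalues; in particular this image lies in no conjugate of $\Sp_{n-1}$ or $\mathrm{SO}_{n-1}$. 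Combining this with the description — furnished by the largeness of $\Gal(f)$, as in \cite{xuezarhin2} and the Zarhin papers cited there — of each $V_j$ as the standard $(n-1)$-dimensional irreducible, primitive representation of $\Sn$ or $\An$, the classification of irreducible connected reductive subgroups of $\GL_{n-1}$ forces the image of $\Hdg(J^{(f,q)})_\C$ in each factor $\GL(V_j)$ to be all of $\GL(V_j)$; equivalently $\Hdg(J^{(f,q)})^{\mathrm{der}}_\C$ surjects onto every $\SL(V_j)$.

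It then remains to show (i) that the central torus of $\Hdg(J^{(f,q)})$ is as large as that of $\U$ and (ii) that $\Hdg(J^{(f,q)})^{\mathrm{der}}_\C$ is the \emph{whole} product $\prod_{\{j,-j\}}\SL(V_j)$, not a subproduct obtained by identifying some of the factors. For (i), the relevant cocharacter lattice is spanned by the vectors $\bigl(2n_{aj}-(n-1)\bigr)_{\{j,-j\}}$ for $a\in(\Z/q)^\times$, and a short Fourier computation identifies the $\chi$-component of $\bigl(2n_j-(n-1)\bigr)_j$, for an odd Dirichlet character $\chi$ modulo $q$, with a nonzero rational multiple of $(n-\chi(n))\sum_{j\in(\Z/q)^\times}\overline{\chi(j)}\,j$; the sum is, up to the factor $q$, the generalized Bernoulli number $B_{1,\overline{\chi}}$, nonzero since $\overline{\chi}$ is odd, and $n-\chi(n)\ne0$ because $\abs{\chi(n)}=1<4\le n$, so every such component is nonzero and the torus is maximal. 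For (ii), Goursat's lemma together with the simplicity of $\mathrm{PSL}_{n-1}$ reduces the question to ruling out that two distinct pairs $\{j,-j\}$, $\{j',-j'\}$ have their Hodge-number profiles $a\mapsto n_{aj}$ and $a\mapsto n_{aj'}$ on $(\Z/q)^\times$ agreeing up to the symmetry $x\mapsto n-1-x$; equivalently, that $a\mapsto\lfloor na/q\rfloor$ be invariant under translation by some $t\in(\Z/q)^\times$ with $t\ne\pm1$. This is exactly what (A)--(C) forbid: in case (A), $n_j=j$ and no such $t$ exists; in case (B), the oddness of $p$ makes $(\Z/q)^\times$ cyclic and $n\not\equiv1\pmod q$ obstructs periodicity along any proper subgroup; in case (C), where $(\Z/q)^\times$ acquires extra $2$-torsion, the extra condition $n\not\equiv q-1\pmod{2q}$ is what is needed to kill the residual order-two symmetries.

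I expect the combinatorial heart — proving, under each of (A), (B), (C), that $a\mapsto\lfloor na/q\rfloor$ on $(\Z/q)^\times$ admits no nontrivial translation or reflection symmetry — to be the main obstacle, with the case $p=2$, where $(\Z/q)^\times$ is no longer cyclic and the residue of $n$ modulo $2q$ must be tracked carefully, the most delicate part of the argument. Granting (i) and (ii), $\Hdg(J^{(f,q)})_\C=\U_\C$, hence $\Hdg(J^{(f,q)})=\U(\H_1(J^{(f,q)},\Q),\phi_q)$.
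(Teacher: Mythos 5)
Your overall architecture (eigenspace decomposition $V_\C=\oplus_j V_j$, Hodge multiplicities $n_j=\lfloor nj/q\rfloor$, surjectivity onto each factor, then a Goursat/Ribet step to rule out identifications of factors, with a torus computation for the center) is the right shape and parallels the paper. But there is a genuine gap at the first, and most critical, step: your claim that the image of $\Hdg(J^{(f,q)})_\C$ in each $\GL(V_j)$ is all of $\GL(V_j)$ because it acts irreducibly and contains a noncentral semisimple element with exactly two eigenvalues (hence avoids $\Sp_{n-1}$ and $\mathrm{SO}_{n-1}$). That implication is false: an irreducible semisimple subalgebra of $\sL_\C(V_j)$ can perfectly well contain such an element without being $\sL_\C(V_j)$ --- for instance $\sL_m(\C)$ acting on $\Lambda^2\C^m$ contains noncentral semisimple elements with exactly two eigenvalues of unequal multiplicities, as do many tensor-product representations. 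What actually forces the image to be full is a coprimality condition on the multiplicities: conditions (A)--(C) are used (via \cite[Prop.~2.1, 2.2]{xuezarhin2}, Remark~\ref{rem:relative-prime} and \cite[Lemma 3.3]{xuezarhin2}) precisely to produce an $a$ with $\gcd(\lfloor na/q\rfloor,\,n-1)=1$, and that is what Lemma~\ref{lem:one-factor} needs. In your write-up (A)--(C) are relegated entirely to the later ``no translation symmetry of $a\mapsto\lfloor na/q\rfloor$'' step, so the place where they are genuinely indispensable is covered by an invalid classification argument. Moreover, your appeal to $V_j$ being ``the standard $(n-1)$-dimensional primitive representation of $\ST_n$ or $\An$'' is not available: $\Gal(f)$ does not act on $\H_1(J^{(f,q)},\Q)$ in a way that constrains the Hodge group's image in $\GL(V_j)$; the largeness of $\Gal(f)$ enters only through $\End^0(J^{(f,q)})=E$ and the determination of the center of $\hdg$ (\cite{xuezarhin1}), and the irreducibility of $V_j$ under $\hdg^{ss}_j$ is itself a consequence of that center computation, not of symmetric-group representation theory.

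Two further comments. First, the factor-separation step: the paper reduces it, as you do, to showing that the profile $h(a)=\bigl((n-1)/2-\lfloor na/q\rfloor\bigr)^2$ admits no invariance under multiplication by $a\neq\pm1$ on $(\Z/q\Z)^*$, but it proves this (Section 4) as a purely combinatorial statement about even functions monotonic on $[1,q/2]$ that are nonconstant (nonconstancy uses only $4\le n<q$); conditions (A)--(C) play no role there, contrary to your sketch, and your case-(B)/(C) reasoning (``cyclicity obstructs periodicity'', ``kills order-two symmetries'') is not an argument. Second, your treatment of the central torus via the nonvanishing of $B_{1,\overline{\chi}}$ for odd $\chi$ is a genuinely different and plausible route from the paper's, which simply cites \cite[Theorem 1.3]{xuezarhin1} for the center being $E_-$; if fleshed out (one must pass from the span of the Galois-conjugate cocharacters to the abelianization of $\Hdg$) it would be a nice alternative, but as written it is a sketch. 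As it stands, the proposal does not establish the theorem because its first pillar fails.
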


\begin{cor}
  Corollary~0.3, Theorem 4.2 and Theorem 4.3 of \cite{xuezarhin2} all
  hold without the assumption that $n>q$.
\end{cor}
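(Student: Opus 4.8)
The plan is to reduce the corollary to Theorem~\ref{main} in the same way that, in \cite{xuezarhin2}, Corollary~0.3 and Theorems~4.2 and 4.3 of that paper were reduced to \cite[Theorem~0.1]{xuezarhin2}. Concretely, I would revisit the proofs of those three statements and locate every use of the hypothesis $n>q$. The claim --- and the point to be checked --- is that in each case $n>q$ enters only in order to quote \cite[Theorem~0.1]{xuezarhin2}, i.e.\ the identification $\Hdg(J^{(f,q)})=\U(\H_1(J^{(f,q)},\Q),\phi_q)$. Granting this, one replaces each such appeal by Theorem~\ref{main}, which establishes the same identification under the hypotheses $n\ge4$, $p\nmid n$, $f$ irreducible over $K$, $\Gal(f)\in\{\Sn,\An\}$, the small-degree proviso, and one of (A), (B), (C), with no restriction $n>q$; the rest of the arguments of \cite{xuezarhin2} then go through word for word. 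Whatever the precise form the hypotheses take in those three statements, the only role of $n>q$ there is to license the appeal to \cite[Theorem~0.1]{xuezarhin2}; under (A)--(C) that appeal is replaced by Theorem~\ref{main}, and no loss occurs in the overlap with the cases originally treated.

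Two inputs beyond the Hodge group itself must be verified to be insensitive to the relation between $n$ and $q$. First, the computation $\End(J^{(f,q)})=\Z[\zeta_q]$, equivalently $\End^0(J^{(f,q)})=E$: this is provided by the theorems of Zarhin (\cite{ZarhinMRL,ZarhinM,ZarhinMZ2}) and \cite{xuejnt} for $\Gal(f)=\Sn$ or $\An$ with $n\ge5$ (respectively $n=4$, $\Gal(f)=\ST_4$, $p\ge3$), and since the present paper assumes $p\nmid n$ these statements apply directly, with no hypothesis comparing $n$ and $q$. Second, the passage from $\Hdg(J^{(f,q)})=\U(\H_1(J^{(f,q)},\Q),\phi_q)$ to the cohomological conclusions of Corollary~0.3 and Theorems~4.2 and 4.3 --- assertions about the Hodge rings of self-products of $J^{(f,q)}$, the Hodge (and Tate) conjectures for them, and the corresponding behaviour in towers --- is a purely representation-theoretic manipulation with the reductive $\Q$-group $\U(\H_1(J^{(f,q)},\Q),\phi_q)$ and its action on $\H_1$; it depends on $n$ and $q$ only through $\Hdg$, hence is unaffected once $\Hdg$ is known.

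The genuine obstacle is the bookkeeping in the first step: one must be certain that nothing in the proofs of those three statements covertly uses $n>q$ --- for instance for a dimension count, for the irreducibility of some $\F_p[\Gal(f)]$-module, or for a nondegeneracy of the CM type of $(J^{(f,q)},E)$ that can fail when $n<q$. This is exactly what the additional properties of $J^{(f,q)}$ developed in this paper, together with the case analysis underlying conditions (A), (B), (C), are designed to supply. With those in hand, and with the endomorphism-algebra and cohomology inputs above, Corollary~0.3 and Theorems~4.2 and 4.3 of \cite{xuezarhin2} follow with the hypothesis $n>q$ removed.
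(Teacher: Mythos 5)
Your proposal is correct and is exactly the argument the paper intends: the corollary is stated without a separate proof precisely because, in \cite{xuezarhin2}, Corollary~0.3 and Theorems~4.2 and 4.3 are deduced from \cite[Theorem~0.1]{xuezarhin2} (together with the endomorphism-algebra results, which make no comparison of $n$ and $q$), so replacing that appeal by Theorem~\ref{main} removes the hypothesis $n>q$. Your identification of the two auxiliary checks --- that $\End^0(J^{(f,q)})=E$ holds independently of the relation between $n$ and $q$, and that the passage from the Hodge-group identification to the cohomological statements is purely representation-theoretic --- is the same bookkeeping the paper relies on.
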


\begin{rem}\label{sec:n_less_than_q}
  We assume that $n<q$ throughout the rest of the paper since the case
  $n>q$ has already been treated in \cite{xuezarhin2}.
 \end{rem}

\begin{rem}
  Since both $\Hdg(J^{(f,q)})$ and $\U(\H_1(J^{(f,q)},\Q),\phi_q)$ are
  connected $\Q$-algebraic groups, to prove Theorem~\ref{main}, it
  suffices to show that
\[ \dim \Hdg(J^{(f,q)})\geq \dim
\U(\H_1(J^{(f,q)},\Q),\phi_q).\]
It is known that 
\[ \dim\U(\H_1(J^{(f,q)},\Q),\phi_q) =\dim_\Q E^{+}\cdot
\big(\dim_E\H_1(J^{(f,q)},\Q)\big)^2.\]
Let $\hdg$ be the $\Q$-Lie algebra
of $\Hdg(J^{(f,q)})$. It is a reductive $\Q$-Lie subalgebra of
$\End_\Q\big(\H_1(J^{(f,q)},\Q)\big)$, and thus splits into a direct
sum 
\[ \hdg= \c\oplus \hdg^{ss}, \] of its center $\c$ and the semisimple
part $\hdg^{ss}=[\hdg, \hdg]$. By \cite[Theorem 1.3]{xuezarhin1}, if
$\Gal(f)=\ST_n$ and $n\geq 4$, or $\Gal(f)=\An$ and $n\geq 5$, the
center $\c$ coincides with $E_{-}$. Notice that
\[\dim_\Q E_{-}=\dim_\Q E^{+}=[E:\Q]/2.\]
Theorem~\ref{main} follows if we show that 
\begin{equation}
  \label{eq:1}
\dim_\Q \hdg^{ss}\geq \frac{1}{2}[E:\Q] \big((\dim_E\H_1(J^{(f,q)},\Q))^2-1\big).  
\end{equation}
\end{rem}

The paper is organized as follows.  In section 2 we study the Galois
actions on certain vector spaces.  In section 3 we recall some facts
about the Hodge Lie algebra $\hdg$. The proof of Theorem~\ref{main} is
given at the end of section 3 except a key arithmetic lemma, which is
proven in Section 4.

\section{Galois Actions}

Throughout this section, let $E$ be a field that is a finite Galois
extension of $\Q$ with Galois group $G$. Let $V$ be a $E$-vector space
of finite dimension. We write $V_\Q$ for the underlying $\Q$-vector
space of $V$, and $V_{\C}$ for the $\C$-vector space $V\otimes_\Q
\C=V_\Q\otimes_\Q \C$. Let $\Aut(\C)$ be the group of all
automorphisms of $\C$.  It act semilinearly on $V_{\C}=V\otimes_\Q \C$
through the second factor. More explicitly, $\forall \kappa \in
\Aut(\C), v\otimes z\in V\otimes_\Q \C$, we define $\kappa (v\otimes
z):=v\otimes \kappa(z)$. It follows that $\forall x\in V\otimes_\Q \C$
and $c\in \C$, $\kappa(cx)=\kappa(c)x$.  On the other hand, $E$ acts
on $V_\C=V\otimes_\Q \C$ through its first factor. It follows that
$V_\C$ is a free $E\otimes_\Q\C$ module of rank $\dim_E V$, and the
action of $E=E\otimes 1\subseteq E\otimes_\Q\C$ commutes with that of
$\Aut(\C)$. In other words,
\[\kappa((e\otimes 1)x)=(e\otimes 1)\kappa(x), \quad \forall \kappa
\in \Aut(\C),  e\in E, \text{ and } x\in V_\C.\]

Let's fix an embedding $E\hookrightarrow \C$. This allows us to
identify each Galois automorphism $\sigma: E\to E$ with the embedding
$\sigma: E\to E \subset \C$ of $E$ into $\C$.
It is well known that
\[ E_\C:=\E\otimes_\Q \C =\bigoplus_{\sigma\in G}
E\otimes_{E,\sigma}\C=\bigoplus_{\sigma\in G} \C_{\sigma}, \text{
  where } \C_\sigma:=E\otimes_{E,\sigma}\C. \] So every $E_\C$ module
$W$ splits as a direct sum $ W=\oplus_{\sigma\in G} W_\sigma$, where
\[ W_\sigma:=\C_\sigma W=\{w\in W\mid (e\otimes 1)w= \sigma(e)w,
\forall e\in E\}.\] In particular, $V_\C=\oplus_{\sigma\in G}
V_\sigma$, and each $V_\sigma$ is a $\C$-vector space of dimension
$\dim_E V$. For each $\sigma \in G$, let $P_\sigma: V_\C\to V_\sigma$
be the $\C$-linear projection map from $V_\C$ to the summand
$V_\sigma$. Similarly, for each pair $\sigma\neq \tau$, we write
$P_{\sigma, \tau}=P_\sigma\oplus P_\tau: V_\C\to V_\sigma\oplus
V_\tau$ for the projection map onto this pair of summands.

We claim that $\Aut(\C)$ permutes the set $\{V_\sigma \mid \sigma\in
G\}$, and the action factors through the canonical
restriction 
\[\Aut(\C)\twoheadrightarrow G,\quad \kappa \mapsto \kappa\mid_E.\]  Indeed, for all $\kappa
\in \Aut(\C), e\in E$ and $x_\sigma\in V_\sigma$, 
\[ (e\otimes 1)\kappa (x_\sigma)= \kappa ((e\otimes
1)x_\sigma)=\kappa( \sigma(e)x_\sigma)=
\kappa(\sigma(e))\kappa(x_\sigma)=\kappa\sigma(e)\kappa(x_\sigma).\]
Clearly $\kappa\sigma(e)=((\kappa\mid_E) \sigma)(e)$. By an abuse of
notation, we write $\kappa$ for the restriction $\kappa\mid_E$.
So it follows that $\kappa(x_\sigma)\in V_{\kappa\sigma}$, and thus
$\kappa(V_\sigma)=V_{\kappa\sigma}$ for all $\kappa \in \Aut(\C)$ and
$\sigma\in G$. 


Let us define an action of $\Aut(\C)$ on the set of projection
$\PP=\{ P_\sigma \mid \sigma\in G\}$ by
\[ \kappa_* P_\sigma:= \kappa \circ P_\sigma \circ \kappa^{-1}.\] Then
for any element $\sum  x_\sigma \in \oplus_{\sigma\in G}
V_\sigma=V_\C$ and $P_\tau\in \PP$,
\[ (\kappa_* P_\tau)(\sum x_\sigma)= \kappa\circ P_\tau\left(\sum
  \kappa^{-1}(x_\sigma)\right)=\kappa (\kappa^{-1}(x_{\kappa
  \tau}))=x_{\kappa \tau},\] where all summations runs through
$\sigma\in G$, and we used the fact that $\kappa^{-1}(x_\sigma)$
belongs to $V_{\tau}$ if and only if $\sigma= \kappa\tau$. Therefore,
\[\kappa_*P_\sigma=P_{\kappa\sigma}.\]
Clearly $\Aut(\C)$ acts transitively on $\PP$.  Since
$P_{\sigma,\tau}=P_\sigma\oplus P_\tau$, we have similarly an action
of $\Aut(\C)$ on the set $\PP\PP:=\{P_{\sigma,\tau}\mid (\sigma,
\tau)\in G^2, \sigma\neq \tau\}$ by 
\[ \kappa_* P_{\sigma, \tau}=\kappa\circ P_{\sigma, \tau}\circ
\kappa^{-1}=P_{\kappa\sigma, \kappa \tau}.\] The $\Aut(\C)$-orbit
$O_{\sigma,\tau}$ of each $P_{\sigma, \tau} \in \PP\PP$ consists of
all elements of the form $P_{\kappa\sigma, \kappa\tau}$ with
$\kappa\in G$.


\begin{lem}\label{lem:galois-actions}
  Let $W_\Q\subseteq V_\Q$ be any $\Q$-subspace of $V_\Q$, and
  $W_\C:=W_\Q\otimes_\Q\C\subseteq V_\C$ be its complexification. 
  \begin{itemize}
  \item[(i)] If there exists $\sigma_0\in G$ such that
    $P_{\sigma_0}(W_\C)=V_{\sigma_0}$, then
    $P_{\sigma}(W_\C)=V_{\sigma}$ for all $\sigma\in G$.
  \item[(ii)] If there exists a pair $(\sigma_0,\tau_0)\in G^2$ with
    $\sigma_0\neq \tau_0$
    such that $P_{\sigma_0,\tau_0}(W_\C)=V_{\sigma_0}\oplus
    V_{\tau_0}$, then $P_{\sigma,\tau}(W_\C)=V_{\sigma}\oplus V_\tau$
    for all $P_{\sigma,\tau}\in O_{\sigma_0,\tau_0}$.
  \end{itemize}
\end{lem}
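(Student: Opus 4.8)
The plan is to use the single structural fact that distinguishes $W_\C$ among all $\C$-subspaces of $V_\C$: because it is the complexification of the $\Q$-rational subspace $W_\Q\subseteq V_\Q$, it is invariant under the semilinear $\Aut(\C)$-action on $V_\C$. Concretely, a general element of $W_\C$ has the form $\sum_i w_i\otimes z_i$ with $w_i\in W_\Q$ and $z_i\in\C$, and $\kappa\!\left(\sum_i w_i\otimes z_i\right)=\sum_i w_i\otimes\kappa(z_i)\in W_\Q\otimes_\Q\C=W_\C$; running the same argument with $\kappa^{-1}$ gives $\kappa(W_\C)=W_\C$ for every $\kappa\in\Aut(\C)$. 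Once this is in hand, both parts follow formally from the description of the $\Aut(\C)$-action on the projections worked out above.

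For (i), I would fix an arbitrary $\sigma\in G$ and, using the surjectivity of $\Aut(\C)\twoheadrightarrow G$, choose $\kappa\in\Aut(\C)$ restricting to $\sigma\sigma_0^{-1}$ on $E$, so that $\kappa\sigma_0=\sigma$. Since $P_\sigma=\kappa_*P_{\sigma_0}=\kappa\circ P_{\sigma_0}\circ\kappa^{-1}$ (this is $\C$-linear because the semilinear twists cancel), one computes
\[
P_\sigma(W_\C)=\kappa\big(P_{\sigma_0}(\kappa^{-1}(W_\C))\big)=\kappa\big(P_{\sigma_0}(W_\C)\big)=\kappa(V_{\sigma_0})=V_{\kappa\sigma_0}=V_\sigma,
\]
using, in order, $\C$-linearity of $P_\sigma$, $\Aut(\C)$-invariance of $W_\C$, the hypothesis $P_{\sigma_0}(W_\C)=V_{\sigma_0}$, and $\kappa(V_{\sigma_0})=V_{\kappa\sigma_0}$.

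Part (ii) is the identical argument applied to pairs. Given $P_{\sigma,\tau}\in O_{\sigma_0,\tau_0}$, by the description of the orbit there is a $\kappa\in G$ with $(\sigma,\tau)=(\kappa\sigma_0,\kappa\tau_0)$; lifting $\kappa$ to $\Aut(\C)$ and using $P_{\sigma,\tau}=\kappa_*P_{\sigma_0,\tau_0}=\kappa\circ P_{\sigma_0,\tau_0}\circ\kappa^{-1}$, we get
\[
P_{\sigma,\tau}(W_\C)=\kappa\big(P_{\sigma_0,\tau_0}(W_\C)\big)=\kappa(V_{\sigma_0}\oplus V_{\tau_0})=V_{\kappa\sigma_0}\oplus V_{\kappa\tau_0}=V_\sigma\oplus V_\tau.
\]
I do not expect any genuine obstacle: the whole content is the reduction to $\Aut(\C)$-invariance of $W_\C$ together with the transitivity of $\Aut(\C)$ on $\PP$ (for (i)) and the orbit bookkeeping on $\PP\PP$ (for (ii)). The only points demanding a line of care are that $\kappa\circ P_{\sigma_0}\circ\kappa^{-1}$ is again $\C$-linear despite $\kappa$ being only semilinear, and that it really equals $P_{\kappa\sigma_0}$ — both already recorded in the discussion preceding the lemma.
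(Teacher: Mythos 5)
Your proposal is correct and follows essentially the same route as the paper's own proof: observe that $W_\C$ is $\Aut(\C)$-invariant, write $P_\sigma=\kappa_*P_{\sigma_0}=\kappa\circ P_{\sigma_0}\circ\kappa^{-1}$ for a suitable lift $\kappa$, and conclude via $\kappa(V_{\sigma_0})=V_{\kappa\sigma_0}$, with the identical bookkeeping for pairs in part (ii). The only difference is that you spell out the invariance of $W_\C$ and the $\C$-linearity of the conjugated projection, which the paper takes as immediate.
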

\begin{proof}
  Clearly, $W_\C$ is $\Aut(\C)$-invariant. For each $\sigma\in G$,
  let us choose $\kappa \in \Aut(\C)$ such that $\sigma=\kappa
  \sigma_0$. Then 
  \[ P_\sigma(W_\C)= (\kappa_* P_{\sigma_0})(W_\C)= \kappa \circ
  P_{\sigma_0}\circ \kappa^{-1}(W_\C)=\kappa\circ
  P_{\sigma_0}(W_{\C})=\kappa(V_{\sigma_0})=V_\sigma.\] This proves
  part (i). 
  Similarly, suppose that
  $P_{\sigma_0,\tau_0}(W_\C)=V_{\sigma_0}\oplus V_{\tau_0}$. For all
  $P_{\sigma,\tau}\in O_{\sigma_0,\tau_0}$, there exists $\kappa \in
  \Aut(\C)$ such that $\sigma=\kappa \sigma_0$ and $\tau=\kappa
  \tau_0$. So we have
\[
\begin{split}
P_{\sigma,\tau}(W_\C)&=(\kappa_*
P_{\sigma_0,\tau_0})(W_\C)=\kappa\circ P_{\sigma_0,\tau_0}\circ
\kappa^{-1}(W_\C)=\kappa \circ P_{\sigma_0,\tau_0}
(W_\C)\\&=\kappa(V_{\sigma_0}\oplus
V_{\tau_0})=\kappa(V_{\sigma_0})\oplus \kappa
(V_{\tau_0})=V_\sigma\oplus V_\tau,
\end{split}
\]
and part (ii) follows. 
\end{proof}

Let $R$ be a commutative ring with unity, and $N$ be a free $R$-module
of finite rank. We write $\Tr_R:\End_R(N)\to R$ for the trace map,
and \[\sL_R(N):=\{ g\in \End_R(N)\mid \Tr_R(g)=0\}\] for the $R$-Lie
algebra of traceless endomorphisms of $N$.
It is well-known that 
\[ \sL_E(V)\otimes_\Q\C =
\sL_{E_\C}(V_\C)=\sL_{E_\C}(\oplus_{\sigma\in
  G}V_\sigma)=\bigoplus_{\sigma\in G} \sL_{\C}(V_\sigma).\] We will
denote the projection map $\sL_E(V)\otimes_\Q\C \to \sL_\C(V_\sigma)$
again by $P_\sigma$, and similarly for $P_{\sigma,\tau}$. Clearly,
each $\sL_{\C}(V_\sigma)$ has $\C$-dimension $(\dim_EV)^2-1$.

For the rest of the section, we assume additionally that $E$ is a
CM-field. For any $\sigma\in G$, let $\bar\sigma:E\to E$ be the
complex conjugation of $\sigma$. In other words, $\bar{\sigma}$ is the
composition $E\xrightarrow{\sigma}E\to E$, where the second arrow
stands for the complex conjugation map $e\mapsto \bar{e}$.

\begin{lem}\label{lem:dimension-comparison}
  Let $\k$ be a semisimple $\Q$-Lie subalgebra of $\sL_E(V)$, and
  $\k_\C:=\k\otimes_\Q \C$ be its complexification.  Suppose that the
  following two conditions holds:
\begin{itemize}
\item[(I)] there exists $\sigma_0\in G$ such that
  $P_{\sigma_0}(\k_\C)=\sL_\C(V_{\sigma_0})$;
\item[(II)] For each pair $(\sigma, \tau)\in G^2$ with $\sigma\neq
  \tau$ and $\sigma\neq \bar\tau$, there exists $P_{\sigma_0,
    \tau_0}\in O_{\sigma,\tau}$ such $P_{\sigma_0,
    \tau_0(\k_\C)}=\sL_\C(V_{\sigma_0})\oplus \sL_\C(V_{\tau_0})$.
\end{itemize}
Then \[\dim_\Q \k\geq \frac{1}{2}[E:\Q]\left((\dim_E V)^2-1\right).\]
\end{lem}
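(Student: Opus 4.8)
The plan is to exploit the $\Aut(\C)$-action on the components $\sL_\C(V_\sigma)$ of $\k_\C$ together with the extra constraint coming from the CM-structure, namely that the center $\c = E_{-}$ already contributes $\tfrac12[E:\Q]$ to $\dim_\Q\Hdg$, so that $\k$ lives inside $\sL_E(V)$ and its complexification $\k_\C$ injects into $\bigoplus_{\sigma\in G}\sL_\C(V_\sigma)$. Write $d:=\dim_E V$, so $\dim_\C\sL_\C(V_\sigma)=d^2-1$ and $|G|=[E:\Q]$. Since $\dim_\Q\k=\dim_\C\k_\C$, it suffices to prove $\dim_\C\k_\C\ge \tfrac12[E:\Q](d^2-1)$.

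First I would use condition (I) and Lemma~\ref{lem:galois-actions}(i), applied with $W_\Q=\k$ (now a $\Q$-subspace of $\sL_E(V)_\Q$, whose complexification sits in $\bigoplus_\sigma\sL_\C(V_\sigma)$), to upgrade the single hypothesis $P_{\sigma_0}(\k_\C)=\sL_\C(V_{\sigma_0})$ to $P_\sigma(\k_\C)=\sL_\C(V_\sigma)$ for \emph{every} $\sigma\in G$; here the role of the ambient space $V_\Q$ in the lemma is played by $\sL_E(V)$ and the role of $V_\sigma$ by $\sL_\C(V_\sigma)$, which is legitimate since the decomposition $\sL_E(V)\otimes_\Q\C=\bigoplus_\sigma\sL_\C(V_\sigma)$ is $\Aut(\C)$-equivariant. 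Likewise, condition (II) together with Lemma~\ref{lem:galois-actions}(ii) gives $P_{\sigma,\tau}(\k_\C)=\sL_\C(V_\sigma)\oplus\sL_\C(V_\tau)$ for every pair with $\sigma\ne\tau$ and $\sigma\ne\bar\tau$.

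Next comes the structural heart. Since $\k$ is semisimple, $\k_\C$ is a semisimple $\C$-Lie algebra, hence a direct sum of simple ideals; correspondingly, as a $\C$-Lie-subalgebra of $\bigoplus_{\sigma\in G}\sL_\C(V_\sigma)$ it is the ``graph'' of a partition of $G$ into blocks together with isomorphisms identifying the simple factors $\sL_\C(V_\sigma)$ within a block. Concretely, by Goursat-type reasoning: the surjectivity of each $P_\sigma$ forces each projection of $\k_\C$ to a single factor to be onto $\sL_\C(V_\sigma)\cong\sll_d$; and for a pair $(\sigma,\tau)$ outside the forbidden diagonal $\sigma\ne\bar\tau$, the projection to $\sL_\C(V_\sigma)\oplus\sL_\C(V_\tau)$ being onto means those two factors are \emph{not} in the same block (in a semisimple Lie algebra the image of a subalgebra surjecting onto each of two isomorphic simple factors is either the full product or the diagonal, and the diagonal is ruled out by surjectivity onto the pair). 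Therefore the only coincidences allowed among the factors indexed by $G$ are between $\sigma$ and $\bar\sigma$. Hence the blocks of the partition all have size $1$ or $2$, each size-$2$ block being of the form $\{\sigma,\bar\sigma\}$; consequently the number of blocks is at least $|G|/2=[E:\Q]/2$, and each block contributes a copy of $\sll_d$ of dimension $d^2-1$. This yields
\[
\dim_\C\k_\C \;\ge\; \frac{[E:\Q]}{2}\,(d^2-1),
\]
which is exactly the desired inequality.

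The main obstacle I expect is making the Goursat/blocks argument airtight: precisely, showing that a semisimple $\C$-subalgebra of $\bigoplus_{\sigma}\sll_d$ all of whose one-factor projections are surjective and all of whose admissible two-factor projections are surjective must be a product of diagonals supported on singletons and conjugate-pairs $\{\sigma,\bar\sigma\}$. One must rule out, for instance, a ``triple diagonal'' along three indices $\sigma_1,\sigma_2,\sigma_3$ no two of which are complex-conjugate (such a diagonal would be killed by the pair-projection hypothesis, since projecting a diagonal $\sll_d\hookrightarrow\sll_d\oplus\sll_d$ lands in the diagonal, not the full product), and also rule out any outer-automorphism twist in the identification between $\sL_\C(V_\sigma)$ and $\sL_\C(V_{\bar\sigma})$ affecting the count (it does not, since it is still a single simple factor contributing $d^2-1$). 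A careful statement is: a subalgebra of a product of simple Lie algebras decomposes according to an equivalence relation on the index set, two indices being related iff the corresponding pair-projection is \emph{not} surjective; hypothesis (II) says non-conjugate pairs are always surjective, so every equivalence class is contained in some $\{\sigma,\bar\sigma\}$, and (I) guarantees no class is empty. Once this combinatorial lemma is in place the dimension count is immediate.
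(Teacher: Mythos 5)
Your proof is correct, and its first half coincides with the paper's: you apply Lemma~\ref{lem:galois-actions} with $\sL_E(V)$ in the role of $V$ and $\k$ in the role of $W_\Q$ (legitimate, as you note, because the decomposition $\sL_E(V)\otimes_\Q\C=\bigoplus_\sigma\sL_\C(V_\sigma)$ is the $E_\C$-isotypic one and is $\Aut(\C)$-equivariant) to upgrade (I) and (II) to surjectivity of $P_\sigma(\k_\C)$ for every $\sigma$ and of $P_{\sigma,\tau}(\k_\C)$ for every pair with $\sigma\neq\tau$, $\sigma\neq\bar\tau$. Where you diverge is the endgame. The paper fixes a CM-type $\Phi\subset G$, projects $\k_\C$ onto $\bigoplus_{\sigma\in\Phi}\sL_\C(V_\sigma)$ (so every pair that occurs is automatically non-conjugate) and quotes the Lemma on pp.~790--791 of \cite{Ribet}: surjectivity onto each factor and onto each pair of factors forces the projection to be the whole sum, of dimension $\tfrac12[E:\Q]\left((\dim_E V)^2-1\right)$. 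You instead stay in the full product over $G$ and use the Goursat-type structure theorem: since each single-factor projection is onto a simple algebra, $\k_\C$ has trivial radical and is a direct sum of diagonals supported on the blocks of the relation ``pair projection not surjective''; hypothesis (II) confines every block to a set $\{\sigma,\bar\sigma\}$, so there are at least $[E:\Q]/2$ blocks, each contributing $(\dim_E V)^2-1$. The two routes are the same Goursat mechanism packaged differently: the paper's is shorter because the key statement is cited, while yours is self-contained and gives slightly more information (the explicit block decomposition of $\k_\C$), at the price that the structure theorem you flag as the main obstacle still has to be proved; it does go through along the lines you sketch (each factor $\sL_\C(V_\sigma)$ is the isomorphic image of exactly one simple ideal of $\k_\C$, since distinct ideals commute and hence cannot both surject onto the same non-abelian simple factor), or one can bypass it entirely by projecting to a CM-type and citing Ribet's lemma as the paper does.
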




\begin{proof}
Applying Lemma~\ref{lem:galois-actions} with $\k$ in place of $W$ and
$\sL_E(V)$ in place of $V$, we see that
\begin{gather*}
  P_\sigma(\k_\C)=\sL_\C(V_\sigma), \quad \forall \sigma\in G;\\
  P_{\sigma,\tau}(\k_\C)= \sL_\C(V_\sigma)\oplus\sL_\C(V_\tau), \quad
  \forall (\sigma,\tau)\in G^2 \text{ with }\sigma\neq \tau \text {
    and }\sigma \neq \bar\tau.
\end{gather*}

Let us fix a CM-type $\Phi$ of $E$. By definition, $\Phi$ is a maximal
subset of $G=\Hom(E,\C)$ such that no two elements of $\Phi$ are
complex conjugate to each other. Clearly, $\abs{\Phi}=[E:\Q]/2$, and 
\[ \dim_\C \Big(\bigoplus_{\sigma\in
  \Phi}\sL_\C(V_\sigma)\Big)=\frac{1}{2}[E:\Q](\dim_E(V)^2-1).\] Let
$\k_\C'$ be the projection of $\k_\C$ on $\oplus_{\sigma\in
  \Phi}\sL_\C(V_\sigma)$. It follows that the projection $\k'_\C\to
\sL_\C(V_\sigma)$ is surjective for all $\sigma\in \Phi$, and $\k'_\C$
also projects surjectively onto $\sL_\C(V_\sigma)\oplus
\sL_\C(V_\tau)$ for all distinct pairs $\sigma, \tau\in \Phi$.
Therefore, $\k_\C'=\oplus_{\sigma\in \Phi}\sL_\C(V_\sigma)$ by the
Lemma on pp.790-791 of \cite{Ribet}.  In particular, we get
\[ \dim_\Q \k =\dim_\C \k_\C\geq \dim_\C
\k_\C'=\frac{1}{2}[E:\Q]\left((\dim_E V)^2-1\right).\]
\end{proof}

In the next section, we will show that our semisimple part of Hodge
Lie algebra $\hdg^{ss}=[\hdg, \hdg]$ satisfies (I) and (II) of
Lemma~\ref{lem:dimension-comparison} and thus prove our Main Theorem.

\section{the hodge lie algebra }
We keep all notation and assumptions of the previous sections. More
specifically, $\zeta_q$ is a primitive $q$-th root of unity,
$E=\Q(\zeta_q)$ and $G=\Gal(E/\Q)=(\Z/q\Z)^*$, where each $a\in
(\Z/q\Z)^*$ maps $\zeta_q$ to $\zeta_q^a$.  In order to simplify the
notation, we write $X$ for the abelian variety $J^{(f,q)}$, and $V$
for its first rational homology group $\H_1(X, \Q)$.  In addition, we
assume that $\End^0(X)=E$.

Recall that $E_\C=E\otimes_\Q\C$. Let $\Lie(X)$ be the complex tangent space to the origin of $X$. By
functoriality, $E$ acts on $\Lie(X)$ and provides $\Lie(X)$ with a
natural structure of $E_\C$-module. Therefore, $\Lie(X)$
splits into a direct sum \[\Lie(X)=\oplus_{a\in G} \Lie(X)_a.\]
where $\Lie(X)_a:=\{x\in \Lie(X)\mid (\zeta_q\otimes 1) x= \zeta_q^a
x\}$. 
Let us put $n_a=\dim_\C\Lie(X)_a$. It is known that $n_a=[na/q]$
(see \cite{ZarhinM,ZarhinPisa}),
where $[x]$ is the maximal integer that's less or equal to $x$, and we
take the representative $1\leq a \leq q-1$. 

\begin{rem}\label{rem:relative-prime}
  By \cite[Proposition 2,1, 2.2]{xuezarhin2}, the assumptions
  (A)(B)(C) of Theorem~\ref{main} guarantee that there exists an
  integer $a$ such that \[ 1\leq a \leq q-1, \quad \gcd(a, p)=1\] and
  the integers $[na/q]$ and $\dim_EV=n-1$ are relative prime. We note
  that the conditions (A)(B)(C) of Theorem~\ref{main} are equivalent
  to the conditions (A)(B)(C) of \cite[Theorem 0.1]{xuezarhin2}.
\end{rem}

Since $V=\H_1(X,\Q)$ carries a natural structure of $E$-vector space,
the first complex homology group
$V_\C=\H_1(X,\C)=\H_1(X,\Q)\otimes_\Q\C$ carries a structure of
$E_\C$-module, and therefore splits into a direct sum
\[ V_\C= \oplus_{a\in G} V_a. \]
Each $V_a$ is a $\C$-vector space of dimension $\dim_EV= n-1$. 

There is a canonical Hodge decomposition (\cite[chapter 1]{Mumford},
\cite[pp.~52--53]{Deligne})
\[V_\C=\H_1(X,\C)=\H^{-1,0}(X) \oplus \H^{0,-1}(X)\] where
$\H^{-1,0}(X)$ and $\H^{0,-1}(X)$ are mutually ``complex conjugate"
$\dim(X)$-dimensional complex vector spaces. This splitting is
$E$-invariant, and $\H^{-1,0}(X)$ and $\Lie(X)$ are canonically
isomorphic as $E_\C$-modules. In particular, 
\[ \dim_\C \H^{-1,0}(X)_a= \dim_\C \Lie(X)_a= n_a. \]

Let $\f_H^{0}=\f_{H,Z}^{0}:V_\C \to V_\C$ be the $\C$-linear operator
such that 
\[\f_H(x) =-x/2 \quad \forall \ x \in \H^{-1,0}(X); \quad \f_H^{0}(x)=x/2 \quad
\forall \ x \in \H^{0,-1}(X).\] Since the Hodge decomposition is
$E$-invariant, $\f_H^{0}$ commutes with $E$. Therefore, each $V_a$ is
$\f_H^{0}$-invariant. It follows that the linear operator
$\f_H^{0}:V_a\to V_a$ is semisimple and its spectrum lies in the
two-element set $\{-1/2, 1/2\}$. The multiplicity of eigenvalue $-1/2$
is $n_a=\dim_\C \H^{-1,0}(X)_a$, while the multiplicity of eigenvalue
$1/2$ is $\dim_EV-n_a$. Clearly, the complex conjugate of $a\in
\Gal(E/\Q)=(\Z/q\Z)^*$ is $\bar a=q-a$.  It is known (\cite{Deligne},
\cite{MZ}) that
\begin{equation}
  \label{eq:2}
n_a+n_{\bar a}=\dim_E V.  
\end{equation}
This implies that the multiplicity of the eigenvalue $1/2$ is $n_{\bar{a}}$.

The Hodge Lie algebra $\hdg$ of $X$ is a reductive $\Q$-Lie subalgebra
of $\End_\Q(V)$. Its natural representation in $V$ is completely
reducible and its centralizer in $\End_\Q(V)$ coincides with
$\End^0(X)=E$. Moreover, its complexification
\[ \hdg_\C=\hdg\otimes_\Q \C\subset \End_\Q(V)\otimes_\Q\C
=\End_\C(V_\C)\] contains $\f_H^0$
\cite[Sect. 3.4]{xuezarhin1}. Recall that $\hdg=\c\oplus \hdg^{ss}$,
with $\c$ being the center of $\hdg$ and $\hdg^{ss}=[\hdg,\hdg]$ the
semisimple part. Let $\c_\C:=\c\otimes_\Q \C$ be the complexification
of $\c$ and $\hdg^{ss}_\C:=\hdg^{ss}\otimes_\Q\C$ the complexification
of $\hdg^{ss}$. Clearly, $\hdg^{ss}\subset \sL_{E}(V)$, and thus
\[ \hdg^{ss}_\C\subset \sL_{E_\C}(V_\C)=\oplus_{a\in G}
\sL_\C(V_a). \] We write $\hdg^{ss}_a$ for the image of projection
$P_a: \hdg^{ss}_\C\to \sL_\C(V_a)$. Clearly, each $\hdg^{ss}_a$ is a
semisimple complex Lie subalgebra of $\sL_\C(V_a)$. 

\begin{rem}\label{rem:operator-with-two-eigenvalue}
  Let us decompose $f_H^0$ as $f+f'$ with $f'\in \c_\C$ and $f\in
  \hdg^{ss}_\C$. By \cite[Remark 3.2]{xuezarhin2}, the natural
  representation $V_a$ of $\hdg^{ss}_a$ is simple for all $a\in G$. It
  follows from Schur's Lemma that when restricted to each $V_a$, $f'$
  coincides with multiplication by scalar $c_a\in \C$. Therefore,
  $\hdg^{ss}_\C$ contains an operator
  (namely, $f$) whose restriction on each $V_a$ is diagonalizable with
  at most two eigenvalues: $-1/2-c_a$ of multiplicity $n_a$ and
  $1/2-c_a$ of multiplicity $n_{\bar a}=\dim_EV-n_a$. 
\end{rem}

\begin{lem}\label{lem:one-factor}Let the assumptions be the same as in
  Theorem~\ref{main}.
  There exists an $a\in G=(\Z/q\Z)^*$ such that
  $\hdg^{ss}_a=P_a(\hdg^{ss}_\C)$ coincides with $\sL_\C(V_a)$.
\end{lem}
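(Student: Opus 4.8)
The plan is to prove the lemma not for an arbitrary $a$ but for the particular residue singled out in Remark~\ref{rem:relative-prime}. So fix $a\in G=(\Z/q\Z)^*$ with $1\le a\le q-1$, $\gcd(a,p)=1$ and $\gcd(n_a,n-1)=1$, where $n_a=[na/q]$ and $n-1=\dim_E V$. Since $n\ge 4$, this coprimality forces $1\le n_a\le n-2$: if $n_a$ were $0$ or $n-1$ the gcd would be $n-1\ge 3$. Hence both eigenvalues $\pm 1/2$ of $\f_H^0$ on $V_a$ genuinely occur, with multiplicities $n_a$ and $n_{\bar a}=n-1-n_a$, which are positive and (being $m$ and $(n-1)-m$ with $\gcd(m,n-1)=1$) coprime.

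Next I would invoke Remark~\ref{rem:operator-with-two-eigenvalue}: the operator $f\in\hdg^{ss}_\C$ produced there has $V_a$-component $P_a(f)\in\hdg^{ss}_a\subseteq\sL_\C(V_a)$, which is a semisimple element of $\sL_\C(V_a)$ with exactly two distinct eigenvalues, of multiplicities $n_a$ and $n-1-n_a$; by the same remark $\hdg^{ss}_a$ is a semisimple Lie algebra acting irreducibly, hence faithfully, on the $(n-1)$-dimensional space $V_a$. The whole problem thus reduces to the following abstract statement: if $\g\subseteq\sL_\C(W)$ is a semisimple Lie algebra acting faithfully and irreducibly on a complex space $W$ of dimension $d$, and $\g$ contains a semisimple element with two distinct eigenvalues of multiplicities $m$ and $d-m$ with $\gcd(m,d)=1$, then $\g=\sL_\C(W)$. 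Applying this with $\g=\hdg^{ss}_a$, $W=V_a$, $d=n-1$, $m=n_a$ gives $\hdg^{ss}_a=\sL_\C(V_a)$, which is what we want.

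To establish the abstract statement I would first reduce to simple $\g$. Write $\g=\g_1\oplus\cdots\oplus\g_k$ as a sum of simple ideals and correspondingly $W\cong W_1\otimes\cdots\otimes W_k$ with each $W_i$ a faithful irreducible $\g_i$-module, so $\dim_\C W_i\ge 2$. A semisimple element $f=(f_1,\dots,f_k)$ acts on $W$ with eigenvalue set equal to the sumset of the eigenvalue sets of the $f_i$ on the $W_i$; each $f_i$ is again semisimple (as $f$ is) and, being traceless, is either $0$ or has at least two eigenvalues, so if two or more $f_i$ were nonzero then $f$ would have at least three distinct eigenvalues on $W$. Hence at most one $f_i$ is nonzero, and then the two eigenvalue multiplicities of $f$ on $W$ are both divisible by $\prod_{i\ne 1}\dim_\C W_i$; if $k\ge 2$ this common factor is $\ge 2$, contradicting $\gcd(m,d)=1$. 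So $k=1$ and $\g$ is simple. It then remains to run through the classification of faithful irreducible representations of simple complex Lie algebras admitting a semisimple element with only two eigenvalues, and to check that coprimality of the two multiplicities singles out exactly the defining representation of $\sL_\C(W)$; in particular $\sp_d$ and $\mathfrak{so}_d$ in their standard modules are excluded because there the only two-eigenvalue semisimple elements have equal multiplicities $d/2$.

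The genuinely substantive input is this last classification step: I would either carry it out by going through the simple types and their fundamental (and, as needed, small) representations, or simply cite the corresponding lemma of \cite{xuezarhin1}. Everything preceding it — choosing $a$, extracting the operator $f$, and the tensor-product reduction to the simple case — is routine bookkeeping once Remarks~\ref{rem:relative-prime} and \ref{rem:operator-with-two-eigenvalue} are available, so the classification of simple linear Lie algebras carrying a two-eigenvalue semisimple element is where I expect the real difficulty to lie.
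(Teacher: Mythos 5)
Your proposal follows essentially the same route as the paper: pick the $a$ supplied by Remark~\ref{rem:relative-prime}, feed the two-eigenvalue operator of Remark~\ref{rem:operator-with-two-eigenvalue} into the ``coprime multiplicities force $\sL_\C(V_a)$'' statement, which the paper simply quotes as Lemma~3.3 of \cite{xuezarhin2} (via the proof of Theorem~3.4 there, noting $n>q$ is not needed); your positivity check for $n_a$, $n_{\bar a}$ and the tensor-product reduction to a single simple factor are precisely the bookkeeping hidden in that citation. One correction: the classification input you defer to lives in \cite{xuezarhin2} (Lemma~3.3), not \cite{xuezarhin1}, and, as in the paper, citing it rather than redoing the case analysis is legitimate.
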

\begin{proof}
  The idea is to combine Remark~\ref{rem:relative-prime},
  ~\ref{rem:operator-with-two-eigenvalue} together with Lemma~3.3 of
  \cite{xuezarhin2}.  This result is already contained in the proof of
  \cite[Theorem 3.4]{xuezarhin2}, where we note that the assumption
  $n>q$ in \cite[Theorem 3.4]{xuezarhin2} is not used for this
  particular step of the proof.
\end{proof}
Notice that this is the place where assumptions
(A)(B)(C) in Theorem~\ref{main} are used, since we need to make sure that there
exists $a\in G$ such that $n_a$ and $\dim_EV$ are relative prime in
order to apply Lemma~3.3 of \cite{xuezarhin2}.

Let $h:(\Z/q\Z)^*\to \R$ be the function such that for all $1\leq a
\leq q-1$ with $\gcd(a, q)=1$, 
\begin{equation}
  \label{eq:3}
h(a)=\left(\frac{\dim_EV}{2}-n_a\right)^2=\left(\frac{n-1}{2}-\left[\frac{na}{q}\right]\right)^2.
\end{equation}
By~(\ref{eq:2}), $n_a+n_{\bar a}=\dim_EV$, so $h(a)=h(\bar a)=h(q-a)$,
which is also easy to check directly from~(\ref{eq:3}).  The function
$h$ is non-increasing on the set of integers \[ [1, q/2]_\Z:=\{a \mid
1\leq a \leq q/2, \gcd(a,p)=1\}.\] By Remark~\ref{sec:n_less_than_q},
we have $4\leq n <q$. In particular, $[n/q]=0$. On the other hand, let
$t$ be the maximal element of $[1,q/2]_\Z$. Then $t\neq 1$ and
$[nt/q]\neq 0$. It follows that $h$ is not a constant function.

\begin{lem}\label{lem:two-factors}
  Let the assumption be the same as Theorem~\ref{main}. Let $(a,b)\in
  G^2$ be a pair such that $h(a)\neq h(b)$. Then
  $P_{a,b}(\hdg^{ss}_\C)=\sL_\C(V_a)\oplus \sL_\C(V_b)$.
\end{lem}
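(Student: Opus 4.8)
The plan is to apply the Goursat-style lemma of Ribet (pp.~790--791 of \cite{Ribet}) to the projection of $\hdg^{ss}_\C$ onto the pair of simple summands $\sL_\C(V_a)\oplus\sL_\C(V_b)$. Write $L:=P_{a,b}(\hdg^{ss}_\C)\subseteq\sL_\C(V_a)\oplus\sL_\C(V_b)$; this is a semisimple complex Lie subalgebra whose projections onto each factor are all of $\sL_\C(V_a)$, resp.\ $\sL_\C(V_b)$ (by Lemma~\ref{lem:one-factor} combined with Lemma~\ref{lem:galois-actions}(i), applied after translating $a$ or $b$ into the good index by a suitable $\kappa\in\Aut(\C)$, and noting the projections onto single factors are automatically surjective once one of them is). By Ribet's lemma, either $L=\sL_\C(V_a)\oplus\sL_\C(V_b)$ — which is what we want — or there is a $\C$-linear isomorphism $\psi:V_a\to V_b$ inducing a Lie-algebra isomorphism $\sL_\C(V_a)\xrightarrow{\sim}\sL_\C(V_b)$ such that $L$ is the graph $\{(X,\psi X\psi^{-1})\mid X\in\sL_\C(V_a)\}$. (Here I use that $\mathfrak{sl}_N(\C)$ for $N=\dim_E V=n-1\ge 3$ is simple, so the only "diagonal" subalgebras come from genuine isomorphisms of the standard representations up to the outer automorphism; I should check whether $n-1=3$ forces consideration of the transpose–inverse automorphism, but this does not affect the eigenvalue-counting argument below.)

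So the task reduces to ruling out the graph case. The key tool is Remark~\ref{rem:operator-with-two-eigenvalue}: $\hdg^{ss}_\C$ contains an operator $f$ whose restriction to $V_a$ has eigenvalues $-1/2-c_a$ with multiplicity $n_a$ and $1/2-c_a$ with multiplicity $n_{\bar a}=\dim_E V-n_a$, and whose restriction to $V_b$ has eigenvalues $-1/2-c_b$ with multiplicity $n_b$ and $1/2-c_b$ with multiplicity $n_{\bar b}=\dim_E V-n_b$. Since $f\in\hdg^{ss}_\C\subseteq\sL_{E_\C}(V_\C)$, we have $P_a(f)\in\sL_\C(V_a)$, i.e.\ $n_a(-1/2-c_a)+n_{\bar a}(1/2-c_a)=0$, which pins down $c_a=(\dim_E V/2-n_a)/\dim_E V$ and likewise $c_b=(\dim_E V/2-n_b)/\dim_E V$; the two eigenvalue multiplicities on $V_a$ are $(n_a,n_{\bar a})$ and on $V_b$ are $(n_b,n_{\bar b})$. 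In the graph case, $P_{a,b}(f)=(P_a(f),\psi\,P_a(f)\,\psi^{-1})$, so the traceless operator $P_a(f)$ must, after conjugation, have the eigenvalue multiplicities of $P_b(f)$. But a semisimple traceless operator on $\C^{N}$ with exactly the eigenvalues $\pm 1/2-c$ is determined up to conjugacy by its multiplicity pair; a $\C$-linear conjugation cannot change multiplicities. Hence we would need $\{n_a,n_{\bar a}\}=\{n_b,n_{\bar b}\}$ as multisets, i.e.\ either $n_a=n_b$ or $n_a=n_{\bar b}=\dim_E V-n_b$. Either way $(\dim_E V/2-n_a)^2=(\dim_E V/2-n_b)^2$, that is $h(a)=h(b)$ — contradicting the hypothesis. (If instead the isomorphism $\sL_\C(V_a)\cong\sL_\C(V_b)$ were the transpose–inverse one, the eigenvalues of $P_b(f)$ would be the negatives of those of a conjugate of $P_a(f)$; since the eigenvalue set $\{-1/2-c_a,\,1/2-c_a\}$ of $P_a(f)$ is not symmetric about $0$ unless $n_a=n_{\bar a}$, one again extracts $h(a)=h(b)$ after matching multiplicities.) This eliminates the graph case and forces $L=\sL_\C(V_a)\oplus\sL_\C(V_b)$.

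The main obstacle I anticipate is the careful bookkeeping in the graph case: one must be sure that the \emph{only} proper subalgebra of $\sL_\C(V_a)\oplus\sL_\C(V_b)$ surjecting onto both factors is a graph of a Lie isomorphism, which is exactly the content of Ribet's lemma, and then that every Lie isomorphism $\sL_N(\C)\to\sL_N(\C)$ either preserves or dualizes the standard module — a standard fact that I would cite rather than reprove. Granting that, the eigenvalue-multiplicity argument via $f$ is the substance, and it is precisely engineered so that the numerical invariant distinguishing the two factors is $h$. I would also remark that this argument is the $n<q$ analogue of the reasoning in \cite[proof of Theorem 3.4]{xuezarhin2}, the only new input being that $h$ is non-constant on $[1,q/2]_\Z$ under hypotheses (A),(B),(C), which was already established just before the statement.
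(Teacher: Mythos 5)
Your proposal is correct, and its skeleton is the same as the paper's: you use the identical inputs (surjectivity of the single-factor projections via Lemma~\ref{lem:one-factor} and Lemma~\ref{lem:galois-actions}(i), the semisimple element $f$ of Remark~\ref{rem:operator-with-two-eigenvalue} with eigenvalue multiplicities $n_a,\dim_EV-n_a$ on $V_a$, and the observation that $h(a)=h(b)$ exactly when $n_a=n_b$ or $n_a=\dim_EV-n_b$). The only real difference is in how the final step is discharged: the paper simply quotes \cite[Lemma 3.6]{xuezarhin2} with $d=2$ (noting that its proof does not use positivity of the multiplicities, which matters here since $n<q$ allows $n_a=0$), whereas you re-prove that two-factor case from scratch via the Goursat dichotomy for a subalgebra of $\sL_\C(V_a)\oplus\sL_\C(V_b)$ surjecting onto both simple factors, the classification of automorphisms of $\sL_N(\C)$ (inner versus transpose--inverse, with $N=n-1\ge 3$), and conjugacy-invariance of eigenvalue multiplicities of $P_{a,b}(f)$; both the inner and outer graph cases then force $n_a=n_b$ or $n_a=\dim_EV-n_b$, i.e.\ $h(a)=h(b)$, a contradiction. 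Your route buys self-containedness at the cost of some bookkeeping; the one point you should make explicit (which the paper flags when citing its lemma) is the degenerate possibility $n_a\in\{0,\dim_EV\}$ or $n_b\in\{0,\dim_EV\}$, where $P_a(f)$ or $P_b(f)$ is the zero operator rather than having ``exactly'' two eigenvalues --- your multiset-matching argument still goes through there and again yields $h(a)=h(b)$, but as written your phrasing presumes two distinct eigenvalues.
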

\begin{proof}By (\ref{eq:3}), 
  \[ h(a)-h(b)=(n_a-n_b)(\dim_E V-n_a-n_b). \] So $h(a)\neq h(b)$ if
  and only if $n_a\neq n_b$ and $n_a\neq \dim_EV-n_b$.  Let
  $\k^{ss}=P_{a,b}(\hdg^{ss}_\C)$. By Lemma~\ref{lem:one-factor} and
  part (i) of Lemma~\ref{lem:galois-actions}, both projections
  $\k^{ss}\to \sL_\C(V_a)$ and $\k^{ss}\to\sL_\C(V_b)$ are
  surjective. By Remark~\ref{rem:operator-with-two-eigenvalue},
  $P_{a,b}(f)$ is a semisimple element of
  $\k^{ss}\subseteq \End_\C(V_a)\oplus \End_\C(V_b)$ such that
  $P_{a,b}(f)$ acts on $V_a$ with (at most) 2 eigenvalues of multiplicities
  $n_a$ and $\dim_EV-n_a$ respectively, and similarly for $b$.
  Lemma~\ref{lem:two-factors} follows by setting $d=2$ in \cite[Lemma
  3.6]{xuezarhin2}.  Last, we point out that the assumption that the
  multiplicities $a_i$ are positive in \cite[Lemma 3.6]{xuezarhin2} is
  not used in its proof, so the lemma applies to the case that $n_a$
  or $n_b$ is zero, which may happen if $n<q$.
\end{proof}

\begin{proof}[Proof of Theorem~\ref{main}]
  As remarked at the end of Section 2, Theorem~\ref{main} follows if
  we show that the conditions (I) and (II) of
  Lemma~\ref{lem:dimension-comparison} holds for
  $\k=\hdg^{ss}$. Condition (I) holds by
  Lemma~\ref{lem:one-factor}. To show that Condition (II) holds, by
  Lemma~\ref{lem:two-factors} it is enough to prove that for each
  $(a,b)\in G^2$ with $a\neq b$ and $a\neq \bar{b}$, there exists 
  $x\in G$ such that $h(xa)\neq h(xb)$. Suppose that this is not the
  case, then there exists a pair $(a,b)$ such that $h(xa)=h(xb)$ for
  all $x\in G$. Without loss of generality, we may and will assume
  that $b=1\in (\Z/q\Z)^*$, thus $a\neq \pm 1$.  It follows that
  $h(xa)=h(x)$ for all $x\in (\Z/q\Z)^*$.  Since $h$ is not a constant
  function, such an $a$ does not exists by Lemma~\ref{lem:even} of
  next section. Contradiction.
\end{proof}

\section{Arithmetic Results}
Throughout this section, $G=(\Z/q\Z)^*$. For each $a\in G$, let
$\theta_a:G\to G$ be the translation map: $b\mapsto ab$.  A function
$h:G\to \R$ is said to be \textit{even} if $h\circ \theta_{-1}=
h$. For any $x\leq y\in R$, we write $[x,y]_\Z$ for the set of
integers $\{i \mid x\leq i \leq y, \gcd(i, p)=1\}$.

\begin{lem}\label{lem:even}
  Let $h:(\Z/q\Z)^*\to \R$ be an even function that's monotonic on
  $[1,q/2]_\Z$. If $h\circ \theta_a=h$ for some $a\in
  (\Z/q\Z)^{*}$ and $a \neq \pm 1$, then $h$ is a constant
  function. 
\end{lem}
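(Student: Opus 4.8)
The plan is to reduce Lemma~\ref{lem:even} to a combinatorial statement about multiplicative translates of ``disks'' in $(\Z/q\Z)^*$, and then prove that statement by trapping the radius of the disk between two incompatible inequalities. Suppose $h$ is not constant. By monotonicity the level sets of $h$ on $[1,q/2]_\Z$ are intervals, and the one containing $1$ is a proper initial segment $[1,c]_\Z$ with $1\le c<t:=\max[1,q/2]_\Z$. Writing $\bar x:=\min(\tilde x,\,q-\tilde x)$ for the reduced representative of $x\in(\Z/q\Z)^*$ (with $\tilde x\in\{1,\dots,q-1\}$), we have $h(x)=h(\bar x)$ because $h$ is even, so the corresponding level set of $h$ on $(\Z/q\Z)^*$ is the ``disk'' $A_c:=\{x\in(\Z/q\Z)^*:\bar x\le c\}$. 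Since $h\circ\theta_a=h$ and $h$ is even, $A_c$ is stable under multiplication by $a$ and by $-1$, hence by $H:=\langle a,-1\rangle$; as $1\in A_c$ this forces $H\subseteq A_c$, so $\bar a\le c$, and after replacing $a$ by $q-a\in H$ if necessary (which changes no hypothesis) we may assume $2\le a\le c$. We may further replace $a$ by the element of $\langle a\rangle\setminus\{1\}$ of least reduced representative, since all its powers still lie in $A_c$. It therefore suffices to prove: \emph{if $a\not\equiv\pm1\pmod q$, then no set $A_c$ with $1\le c<t$ is stable under multiplication by $a$.}

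To prove this I would extract two conflicting estimates relating $q$, $c$ and $a$. For the \emph{outer} estimate, note $t\in A_c^{c}:=(\Z/q\Z)^*\setminus A_c$ (since $\bar t=t>c$) and that $A_c^{c}$ is $a$-stable, so $a^k t\bmod q\in A_c^{c}$ for all $k$. For $p$ odd one has $2t\equiv-1\pmod q$, so $a^k t\equiv-a^k\cdot 2^{-1}$; a direct calculation shows that if $a$ were even then $\overline{at}=a/2\le c/2<c$, contradicting $at\in A_c^{c}$, so $a$ is odd, and then $\overline{a^k t}=(q-\tilde a_k)/2$ where $\tilde a_k\in\{1,\dots,q-1\}$ represents $a^k$. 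Taking $k=1$ gives $(q-a)/2>c$, i.e.\ $q>2c+a$; taking all $k$ gives $\tilde a_k<q-2c$, hence (combined with $\overline{a^k}\le c$) $\langle a\rangle\subseteq[1,c]$ as a set of residues, so since $a\ge 2$ the powers of $a$ eventually exceed $c$ as integers, forcing $c\ge\sqrt q$, and closure of $\langle a\rangle$ under inversion already yields a contradiction when $c<\sqrt q$. For the \emph{inner} estimate, take the least integer $x>c/a$ with $p\nmid x$; then $x\le\lfloor c/a\rfloor+2\le c$ (using $a\ge 2$), so $x\in A_c$ and hence $ax\bmod q\in A_c$, while $c<ax\le c+2a<q$ forces $ax\ge q-c$, i.e.\ $q\le 2c+2a$ --- and $q\le 2c+a$ in the typical case $x=\lfloor c/a\rfloor+1$. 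Against the outer estimate $q>2c+a$ this settles the principal case.

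The genuinely delicate point, and where the real work of Section~4 lies, is the exceptional case $p\mid\lfloor c/a\rfloor+1$, in which the inner estimate only gives $q\le 2c+2a$ and no longer directly contradicts $q>2c+a$; here I would either sharpen the outer estimate using higher powers $a^k t$ and the ``escape'' of $\langle a\rangle$ from $[1,q-2c)$, or run a descent on the cyclic group $\langle a\rangle$ using the structural facts already obtained ($\langle a\rangle\subseteq[1,c]$, $c\ge\sqrt q$, all residues odd). I expect this bookkeeping, together with the mid-range $\sqrt q\le c<t$ with $a$ large, to be the hard part. Separately one handles: the prime $p=2$, where $2\notin(\Z/q\Z)^*$ so one uses $t=q/2-1$ and the observation that $\ord(a)=2$ then forces $\bar a=q/2-1=t$, contradicting $c<t$, while $\ord(a)\ge 3$ is treated by the analogue of the argument above; the case $\ord(a)=2$ in general (impossible for $p$ odd, since $(\Z/q\Z)^*$ is then cyclic and $-1$ is its only element of order $2$); and the finitely many small moduli $q$, which are verified by inspection.
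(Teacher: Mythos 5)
Your reduction to the combinatorial statement is sound, and your ``typical case'' does work: the outer estimate $q>2c+a$ (from $a\cdot\frac{q-1}{2}\equiv\frac{q-a}{2}\pmod q$ for odd $a$, after ruling out even $a$) against the inner estimate $q\le 2c+a$ (from $x=\lfloor c/a\rfloor+1$ when $p\nmid x$) is essentially the mechanism of Steps 1--3 of the paper's proof, repackaged in terms of the stable ``disk'' $A_c$. But the proposal is not a proof: you explicitly leave open the exceptional case $p\mid\lfloor c/a\rfloor+1$, and that is exactly where the substance of Section~4 lies. The paper resolves it by (a) normalizing $a$ to be the \emph{maximal} element of $\langle\pm a\rangle\cap[1,q/2]_\Z$, which gives the inequality $2a^2>q$ used repeatedly later; (b) sharpening the outer estimate with the further identities $a\cdot\frac{q-3}{2}\equiv\frac{q-3a}{2}$ and $a\cdot\frac{q-5}{2}\equiv\frac{q-5a}{2}$, available only when $p\neq 3$, resp.\ $p\neq 3,5$, combined with a maximality argument on the set of $x$ with $(2x+1)a<q$; and (c) disposing of $p=2,3,5$ by separate arguments using the subgroup structure of $(\Z/p^r\Z)^*$ (for $p=3,5$ the unique subgroup of order $p$ forces $a$ to be so large that $3a>q$, reducing to the easy case). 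Your two suggested fixes (``higher powers $a^k t$'' or ``a descent on $\langle a\rangle$'') point in the right direction and are morally what Steps 5--7 do, but nothing is carried out, and the unavoidable special treatment of small primes $p$ is absent. So the hard half of the lemma is missing.

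There are also local flaws you would need to repair. The claim ``taking all $k$ gives $\tilde a_k<q-2c$'' is false as stated: $a^k\cdot\frac{q-1}{2}\equiv-\tilde a_k\cdot 2^{-1}$ has reduced representative $(q-\tilde a_k)/2$ only when $\tilde a_k$ is odd; for even $\tilde a_k$ one gets $\tilde a_k/2$ and the opposite inequality $\tilde a_k>2c$, so ``$\langle a\rangle\subseteq[1,c]$ as residues'' does not follow (only $\langle\pm a\rangle\subseteq A_c$, which you already had), and the ensuing ``$c\ge\sqrt q$'' assertion is unsupported --- the correct bound of this type, $2c^2>q$, comes precisely from the maximal-element normalization you did not make (you chose the least element instead, which is the less useful choice). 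The normalization ``replace $a$ by the element of $\langle a\rangle\setminus\{1\}$ of least reduced representative'' can output $-1$ (whose reduced representative is $1$) when $-1\in\langle a\rangle$, and for $a=-1$ the target statement is false, so you must exclude $\pm1$ there. Finally, for $p=2$ the phrase ``treated by the analogue of the argument above'' is not automatic, since $2^{-1}$ does not exist modulo $q$; one must argue separately, e.g.\ via $t=q/2-1$ and $at\equiv q/2-a$, or as the paper does, by showing $\langle\pm a\rangle$ contains the order-two element $2^{r-1}-1=t$, contradicting $t\le c$.
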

\begin{proof} We prove the Lemma in seven steps. 
\begin{step}\label{step:2}
  Let $\langle \pm a \rangle$ be the subgroup of $(\Z/q\Z)^{*}$
  generated by $a$ and $-1$. Clearly $h\circ \theta_b=h$ for any $b\in
  \langle \pm a \rangle$ since $h\circ \theta_a=h$ and $h$ is even. In
  particular, this holds true for the maximal element $b_{\max}$ in
  the set in $\langle \pm a \rangle \cap [1,q/2]_\Z$. If $b_{\max}=1$,
  the group $\langle \pm a \rangle$ is necessarily $\{ \pm 1
  \}$. Therefore, it is enough to prove that $h$ being nonconstant
  implies that $b_{\max}=1$. So with out lose of generality, we assume
  that $a=b_{\max}$ throughout the rest of the proof. Notice that if
  $a\neq 1$, then $2a^2>q$, for otherwise it contradicts the
  maximality of $a$. 
\end{step}
\begin{step}\label{lem:3}
Lemma~\ref{lem:even} holds if $p=2$. \\

Every even function on $(\Z/q\Z)^*$ is constant if $q$ is $2$ or $4$
so we assume that $q=2^r\geq 8$. The group $(\Z/2^r\Z)^* $ is
isomorphic to $ \Z/2\Z\times \Z/2^{r-2}\Z$, where the factor $\Z/2\Z$
is generated by $-1$. Let us assume that $\langle \pm a \rangle$ has
order $2^s$. Since $\langle \pm a \rangle\supseteq \langle \pm 1
\rangle$, it follows that $\langle \pm a \rangle \cong \Z/2\Z\times
\Z/2^{s-1}\Z$. In particular, if $\langle \pm a \rangle \neq \langle
\pm 1 \rangle$, then $\Z/2^{s-1}\Z$ is nontrivial, therefore $\langle
\pm a \rangle$ contains 3 elements of order two. But there are exactly
3 elements of order two in $(\Z/q\Z)^*: -1, 2^{r-1}-1,
2^{r-1}+1$. Hence $\langle\pm a\rangle$ contains all the above
elements of order $2$. So $a=2^{r-1}-1$ since it is the largest
element in $[1,q/2]_\Z$.  Therefore,
  \[h(q/2-1)=h(2^{r-1}-1)=h(a)=(h\circ \theta_a)(1)=h(1).\] Since $h$ is
  monotonic on $[1, q/2]_\Z$, the above equality implies that $h$
  is constant on $[1, q/2]_\Z$ and therefore a constant
  function.

\end{step}
\begin{step}\label{lem:4}
  Let $p$ be an odd prime. Lemma~\ref{lem:even} holds if either $a$
  is even, or $a$ is odd and $3a\geq q$. \\

  It is enough to prove that if $a\neq 1$, then
  $h(1)=h((q-1)/2)$. Since $h(1)=(h\circ
  \theta_a)(1)=h(a)$, by monotonicity $h$ is constant on
  $[1,a]_\Z$. Therefore it is enough to find $b$ such that
  $h((q-1)/2)=h(b)$ and $b\in [1,a]_\Z$. 

  First, let's assume that $a=2b$ is even. Then
  \[ a\cdot\frac{q-1}{2}= (q-1)b \equiv -b \mod q. \] So
  $h((q-1)/2)=h(a(q-1)/2)=h(-b)=h(b)$. Clearly $b=a/2$ lies in
  $[1,a]_\Z$.

Next, assume that $a$ is odd. Then
\[ a\cdot\frac{q-1}{2}=\frac{qa-a}{2} \equiv \frac{q-a}{2} \pmod q.\]
So $h((q-1)/2)=h((q-a)/2)$.  Let $b=(q-a)/2$.  When
$3a\geq q$, we have $ b=(q-a)/2 \leq a$ hence $b$ lies in $[1,a]_\Z$
as desired.
\end{step}

\begin{step}\label{lem:5}
Lemma~\ref{lem:even} holds if $p=3$.   \\

  When $p$ is odd, $(\Z/p^r\Z)^*$ is cyclic of order
  $\varphi(p^r)=(p-1)p^{r-1}$. For $p=3$,
  \[(\Z/3^r\Z)^* \cong \Z/(2\cdot 3^{r-1})\Z\cong \Z/2\Z\times
  \Z/3^{r-1}\Z.\] In particular, if $q\geq 9$, $(\Z/q\Z)^*$ contains a
  unique subgroup of order 3 which is generated by $3^{r-1}+1$. If the
  order of $\langle \pm a \rangle$ is coprime to $3$, then $\langle
  \pm a\rangle$ is necessarily $\{\pm 1\}$, which leads to an
  contradiction.  If the order of $\langle \pm a \rangle$ is divisible
  by $3$, then $q\geq 9$ and $\langle \pm a \rangle$ contains
  $3^{r-1}+1$. By assumption on the maximality of $a$ we must have $a
  \geq 3^{r-1}+1$ and hence $3a>q$.
\end{step}
\begin{step}\label{lem:6}
  Assume that both $p$ and $a$ are odd, $p \neq 3$ and
  $3a<q$. Lemma~\ref{lem:even} holds if $7a\geq q$. \\

  Since $p\neq 3$, $(q-3)/2$ lies in $[1, q/2]_\Z$. It is enough to prove
  that $a\neq 1$ implies that $h(1)=h((q-3)/2)$. Indeed, it follows from the
  proof of Step~\ref{lem:4} that $h((q-1)/2)=h((q-a)/2)$. But if $a\neq
  1$ then $a \geq 3$ so $(q-a)/2 \leq (q-3)/2$. If we prove that $h$
  is constant on $[1, (q-3)/2]_\Z$, then $h((q-1)/2)=h((q-a)/2)=h(1)$
  and it follows that $h$ is a constant function. 

  By our assumption $3a<q$, so $(q-3a)/2$ lies in $[1,
  q/2]_\Z$. Notice that
  \[ a\cdot\frac{q-3}{2} \equiv \frac{q-3a}{2} \mod q. \] We see that
  $h((q-3)/2)=h((q-3a)/2)$. Since $h$ is constant on $[1, a]_\Z$, the
  inequality $h(1) \neq h((q-3)/2)$ would imply that $ a <(q-3a)/2$,
  or equivalently $5a<q$.  In particular, $2a<q/2$. But $2\in [1,
  a]_\Z$ since $p$ is odd and $a\geq 3$. So $h(2)=h(1)$, therefore
  $h(2a)=h(1)$ and $h$ is constant on $[1, 2a]_\Z$. But now by our
  assumption $7a\geq q$, or equivalently $ 2a \geq (q-3a)/2$, it
  follows that
\[ h\left(\frac{q-3}{2}\right)=h\left(\frac{q-3a}{2}\right)=h(1).\]
 \end{step}
 \begin{step}\label{lem:7}
   Assume that both $p$ and $a$ are odd, $p \neq 3,5$ and
  $7a<q$. Lemma~\ref{lem:even} holds. \\

  Since $7a<q$ and $p\neq 5$, $(q-5a)/2$ lies in $[1, q/2]_\Z$. By
  similar argument as in Step \ref{lem:6}, $h((q-5)/2)=h((q-5a)/2)$.  We claim that now it is
  enough to show that $h(1)=h((q-5)/2)$. Indeed, by the proof of the
  Step~\ref{lem:6}, all we need to show is that $h(1)=h((q-3)/2)$, but
  since $a\geq 3$, then $(q-3a)/2< (q-5)/2$. So $h$ being constant on
  $[1, (q-5)/2]_\Z$ implies that $h(1)=h((q-3a)/2)=h((q-3)/2)$.

   Let $S$ be the set of all integers 
   \[ S=\{b \mid b \geq 1, p\nmid b, (2b+1)a<q\}. \] Clearly $1\in S$
   so $S$ is not empty. Let $x$ be the maximal element of $S$. By
   Step~\ref{step:2}, $2a^2>q$ so necessarily $x<a$. Since $h$ is
   constant on $[1, a]_\Z$, we must have $h(1)=h(x)$. Notice that $xa<
   q/2$ by assumption. So $h(ax)=h(x)=h(1)$ and it follows that $h$ is
   constant on $[1, ax]_\Z$. Assume that $h(1)\neq h((q-5)/2)$. It is
   necessary that $ax< (q-5a)/2$, or equivalently, $(2x+5)a<q$. But
    we can choose $x'$ from the two elements set $\{x+1,
   x+2\}$ such that $x'$ is coprime to $p$. It follows that $x'\in
   S$. This contradicts the maximality of $x$.
\end{step}
\begin{step}\label{lem:8}
Lemma~\ref{lem:even} holds if $p=5$.     \\

If the order of $\langle \pm a \rangle$ is divisible by
$5$, then $\langle \pm a \rangle$ contains the unique subgroup of
order 5 in $(\Z/5^r\Z)^*$. In particular, $2\cdot 5^{r-1}+1\in \langle
\pm a \rangle$.  It follows that $a> 2\cdot 5^{r-1}+1$ and therefore
$3a>5^r$. The Lemma holds by Step~\ref{lem:4}.

  If the order of $\langle \pm a \rangle$ is coprime to $5$. Then from
  the isomorphism
  \[ \Z/5^r\Z\cong \Z/4\Z\times \Z/5^{r-1}\Z,\] we see that $\langle
  \pm a \rangle$ is has either order $2$ or $4$. If $\langle \pm a
  \rangle$ has order 2, then $\langle \pm a \rangle$ is necessarily
 $\langle\pm 1\rangle$ and this leads to a contradiction. So we assume
  that $\langle \pm a \rangle$ has order $4$ and $a$ is the unique
  element such that $1<a<5^r/2$ and $a^2\equiv -1 \mod 5^r$.  In
  particular, $a^2+1\geq 5^r$. If $a$ is even then the Lemma holds by
  Step~\ref{lem:4}. In particular, this works for $q=p=5$ since $a=2$
  in this case. We assume that $q\geq 25$ and $a$ is odd through out
  the rest of the proof. First we claim that $a \geq 7$. Indeed, If
  $q=25$, then $a=7$ by direct calculation; if $q>25$, then $a>7$
  since $a^2+1\geq q$. This implies that $(q-a)/2\leq
  (q-7)/2$. Therefore, it is enough to prove that $h((q-7)/2)=h(1)$
  since it then follows that $h((q-1)/2)=h((q-a)/2)=h(1)$. By
  Step~\ref{lem:6} we may also assume that $7a<q$. It follows that
  $(q-7a)/2 \in [1, q/2]_\Z$ and $h((q-7)/2)=h((q-7a)/2)$.

  Let $c=[q/a]$. Since $a^2+1\geq q$ and $a<q/2$ we see that $2\leq
  c\leq a$. Let $x=[c/2]$ if $[c/2]$ is not divisible by 5, and 
  $x=[c/2]-1$ otherwise. Notice that $a>x\geq \max\{1, (c-3)/2\}$  and
  $xa \leq q/2$ by our choice of $x$. It follows that $x\in [1, a]_\Z$
  therefore $h(x)=h(1)$, and therefore $h(ax)=h(x)=h(1)$. So $h$ is
  constant on $[1, ax]_\Z$. If $h(1)\neq h((q-7)/2)$,  we must
  have $ xa < (q-7a)/2$, or equivalently, $(2x+7)a <q$.  Then it 
  follows that 
\[ \frac{q}{a} > 2x+7\geq 2\left(\frac{c-3}{2}\right)+7 =
c+4=\left[\frac{q}{a}\right]+4, \] 
which is absurd. 
  \end{step}

Lemma~\ref{lem:even} is proved by combining all the above steps. 
\end{proof}

\end{document}